\newcommand{\set}[1]{\left\{#1\right\}}
\newcommand{\Nat}{\mathbb{N}}
\newcommand{\CB}{\mathcal{B}}
\newcommand{\CM}{\mathcal{M}}
\newcommand{\CP}{\mathcal{P}}
\newcommand{\CQ}{\mathcal{Q}}
\newcommand{\CT}{\mathcal{T}}
\newcommand{\sd}{\bigtriangleup}
\newcommand{\eps}{\varepsilon}
\DeclareMathOperator{\ex}{\mathsf{ex}}
\DeclareMathOperator{\supp}{\mathsf{supp}}
\DeclareMathOperator{\rank}{\mathsf{rank}}
\theoremstyle{plain}
\newtheorem{thm}{Theorem}[section]
\newtheorem{lem}[thm]{Lemma}
\theoremstyle{definition}
\newtheorem{defn}[thm]{Definition}
\theoremstyle{remark}
\newtheorem{rem}[thm]{Remark}
\numberwithin{equation}{section}
\title{Rank as a function of measure}
\author{Tomasz Downarowicz, Yonatan Gutman, Dawid Huczek\footnote{This author's research is supported by the PhD grant MENII N N201 411339, Poland.}}
\begin{document}
\maketitle \abstract{We establish certain topological
properties of rank understood as a function on the set of invariant
measures on a topological dynamical system. To be exact, we show that
rank is of Young class LU (i.e., it is the limit of an increasing
sequence of upper semicontinuous functions).}

\section{Introduction}

Let $(X,T)$ be a topological dynamical system, where $X$ is a compact
metric space and $T:X\to X$ is a continuous mapping. Denote by $\CM_{T}(X)$
the set of all $T$-invariant Borel probability measures on $X$.
It is well known that this is a nonempty, convex compact metric set
(in the weak-star topology), whose extreme points are precisely the
ergodic measures (the collection of which we denote by $\ex\CM_{T}(X)$).
Moreover, $\CM_{T}(X)$ is in fact a Choquet simplex, that is, every
invariant measure $\mu$ admits a unique representation as an integral
average of the ergodic measures (the ergodic decomposition). Thus,
the system $(X,T)$ gives rise to what we call an \emph{assignment},
a function $\Psi$ whose domain is a metric Choquet simplex $K$,
and codomain is the set of measure-preserving systems identified up
to isomorphism. Every such assignment is determined by its restriction
to the set $\ex K$ of extreme points of $K$; the restriction assumes
only ergodic measure-preserving systems and the entire assignment
can be reconstructed from $\Psi|_{\ex K}$ according to the ergodic
decomposition. Two assignments, $\Psi$ on a simplex $\mathcal{P}$,
and $\Psi'$ on a simplex $\mathcal{P}'$ are said to be \textit{equivalent}
if there is an affine homeomorphism $\pi:\mathcal{P}\rightarrow\mathcal{P}'$
such that for every $p\in\mathcal{P}$, $\Psi(p)$ and
$\Psi(\pi(p))$ are isomorphic (as measure preserving systems).

In an attempt to understand the interplay between topological and
measurable dynamics, one encounters the following natural problem:
\begin{itemize}
	\item characterize the abstract assignments that can be realized in topological
dynamical systems.
\end{itemize}

At the moment the complete solution of this problem seems beyond our
reach. There should exist some ``continuity'' or at least ``measurability''
obstructions, but we have at our disposal no good topological or measurable
structure in the collection of classes of measure-preserving systems
modulo isomorphism. Nonetheless, we can produce a number of necessary
conditions by studying the behavior of some isomorphism invariants
with values in more friendly spaces. For instance, it is fairly intuitive,
that if we consider an isomorphism invariant in form of a real number
$r$ (for example the Kolmogorov-Sinai entropy), then $r(\Psi)$ should
be measurable on the simplex $K$. Indeed, in any topological system
the entropy function $h:\CM_{T}(X)\to[0,\infty]$ is not only measurable;
it is a nondecreasing limit of upper semicontinuous functions (i.e.,
of Young class LU), see \cite{DS03}.%
\footnote{ The entropy function is also affine. In \cite{DS03} it is shown
that there are no other entropy obstructions: every affine LU function
$h:\ex K\to[0,\infty]$ defined on any metrizable Choquet simplex
can be modeled as the entropy function in a topological system. %
} Since (except on some domains, e.g. discrete or countable) not every
nonnegative function is of class LU (suffice it to note that an LU
function is Borel measurable), the entropy obstruction is nonvoid;
it implies that not all possible assignments are admissible in topological
systems.%
\footnote{Interestingly, it has been proven (in \cite{KO06} for homeomorphisms
and independently in \cite{D8} for continuous maps) that if the simplex
$K$ has at most countably many extreme points then every assignment
$\Psi$ on $K$ assuming ergodic but not periodic ``values'' on
the extreme points (and extended to the rest of the simplex by averaging)
can be realized in a topological (even minimal) system. This is in
no collision with the entropy obstruction; on a countable set every
function is of class LU. Notice that this fact generalizes the celebrated
Jewett--Krieger Theorem, which can be viewed as a special case concerning
the one-point simplex. %
}

\smallskip{}
 Following the same lines of investigation, in this paper we will
seek an obstruction related to another real-valued (in fact integer-valued)
isomorphism invariant, namely the rank (as defined by Ornstein, Rudolph
and Weiss in \cite{ORW82}). Notice that rank distinguishes systems
of zero Kolmogorov-Sinai entropy, hence any obstruction that we find
is complementary to the entropy obstruction.

Although its definition does not require ergodicity, rank has been
studied mainly for ergodic systems. Any invariant measure on a standard space decomposes 
as an integral average of ergodic measures, hence it is important to understand how rank 
depends on convex combinations of mutually singular measures (and more general integral 
averages). In this aspect we will recall the result proven originally by J. King in \cite{K88}, 
that rank obeys a certain ``additive rule'' (it cannot be affine as it is an integer-valued 
function). Next, we will give the main result of this note, that just like the entropy function, 
the rank function is also of Young class LU. This gives a non-trivial restriction on possible 
assignments on metrizable Choquet simplices having an uncountable number of extreme points 
with ``values'' being measure-preserving systems with entropy zero.

\section{Preliminaries}

Let $(X,\CB,\mu)$ be a probability space. All subsets of $X$ considered
below are assumed to be measurable, and all partitions are (measurable
and) finite.
%
Given a partition $\CP$ of $X$ and a subset $Y\subset X$, the symbol
$\CP|_{Y}$ denotes the partition $\{P\cap Y:P\in\CP\}$ of $Y$.

\begin{defn}\label{def:epsref} For partitions $\CP$ and $\CQ$ we will write $\CP\succ_{\eps}\CQ$ if there exists a set $Y_{\eps}$
of measure at least $1-\eps$ such that $\CP|_{Y_{\eps}}\succ\CQ|_{Y_{\eps}}$
(i.e., $\CP$ refines $\CQ$ relatively on $Y_{\eps}$). \end{defn}
Throughout the rest of this section $X$ denotes a separable metric
space. Most of the definitions and statements hold for more general
topological spaces, but for the sake of simplicity, the theory is
not developed in wider generality. Exceptionally, the space $X$ from
this part of preliminaries will be later interpreted as either $\CM_{T}(Y)$
or $\ex\CM_{T}(Y)$ where $Y$ is a topological space.

\begin{defn} A function $f:X\rightarrow\mathbb{R}\cup\{-\infty,\infty\}$
is called \emph{upper semicontinuous} (u.s.c.) if for all $t\in\mathbb{R}$
the sets $\{x\in X:\, f(x)<t\}$ are open.%
\footnote{ One is accustomed to \emph{real-valued} u.s.c. functions, and such
are always bounded from above. In our setup a u.s.c. function is either
bounded from above or it assumes infinity as a value (necessarily
on a closed set). %
} \end{defn}

\begin{rem}\label{rem: usc}

The characteristic function of a closed set is upper semicontinuous
(see Chapter IV, $\S$6.2 of \cite{B98}). A function $f$ is upper
semicontinuous if and only if it is the pointwise limit of a nonincreasing
sequence of continuous functions from $X$ to $\mathbb{R}\cup\{-\infty,\infty\}$
(see p. 51 of \cite{R88}).

\end{rem}

The next definition originated in \cite{Y1911}:

\begin{defn} A function $f:X\to\mathbb{R}\cup\set{-\infty,\infty}$
is of Young class LU (an LU function for short) if it is the pointwise
limit of a nondecreasing sequence $f_{n}$ of upper semicontinuous
functions. \end{defn}
\noindent
The reader will easily verify that the class LU is closed under finite
sums, finite infima and countable suprema.


\section{Rank of measure preserving systems}

Let $(X,\CB,\mu,T)$ be a measure-preserving dynamical system on a
standard probability space\footnote{Recall that a standard probability space is isomorphic to a union
between a countable atomic measure and the unit interval equipped
with the Lebesgue measure defined on the completion of the Borel
$\sigma$-algebra.}  ($T$ is not assumed to be invertible).
We will be studying the properties of an isomorphism invariant called
rank. The notion was defined in {[}ORW82{]}. In fact the class of
rank-one systems was known much earlier (since the 1960's) but the
term ``rank'' was not yet used. The reader is referred also to the
survey by Ferenczi (\cite{F97}) for more details. Below we provide
the definition of rank preceded by an auxiliary notion.

\begin{defn} Let $(X,\CB,\mu,T)$ be a measure-preserving dynamical system. Let $n_{1},\ldots,n_{k}\in\Nat$. We call $\CT_{1}=\{B_{1},TB_{1},\ldots,T^{n_{1}-1}B_{1}\},$
$\CT_{2}=\{B_{2},TB_{2},\ldots,T^{n_{2}-1}B_{2}\},\dots,\CT_{k}=\{B_{k},TB_{k},\ldots,T^{n_{k}-1}B_{k}\}$
 $k$ \emph{disjoint measurable towers}, if all the sets $T^{i}B_{l}\subset X$
appearing in the towers are measurable and
\[
T^{i}B_{l}\cap T^{i'}B_{l'}=\emptyset\text{ for }(l,i)\neq(l',i').
\]
Introduce the index set $I=I(\CT_{1},\ldots,\CT_{k})\triangleq\bigcup_{l=1}^{k}(\{l\}\times\{0,\dots,n_{l}-1\})$.
A \emph{k-tower partition} $\CP$ associated with $\CT_{1},\CT_{2},\ldots,\CT_{k}$
is the partition of $X$ consisting of the sets $T^{i}B_{l}$ with $(l,i)\in I$
and $R\triangleq X\setminus\bigcup_{(l,i)\in I}T^{i}B_{l}$. The sets
$B_{l}$, $l=1,\ldots,k$ will be referred to as the \emph{bases of
the towers}, the sets $T^{i}B_{l}$, $(l,i)\in I$ (including the
bases) \emph{the level sets}, and $R$ will be referred to as the
\emph{the remainder}. One writes
\[
\CP=\{T^{i}B_{l},R\}_{(l,i)\in I}
\]
We will skip the ranges of the indices (i.e. the notation $(l,i)\in I$)
whenever possible. \end{defn}

\begin{defn}\label{def:rank} Let $k\in\Nat$. One says that $\rank(\mu)\le k$
if for any finite partition $\CQ$ of $X$ and any $\eps>0$ there exists a $k$-tower partition $\CP$,
such $\CP\succ_\eps \CQ$ and the measure of the remainder of $\CP$ is less than $\eps$.%
\footnote{ The requirement on the remainder is meant to assure that every fixpoint of positive
measure eventually requires a separate tower. Without this condition such a fixpoint could be included
in all remainders and the resulting ``rank'' would be lowered by~1. The condition is automatically
fulfilled for nonatomic measures, and can be dropped in any systems possessing no fixpoints of positive
measure. It seems that this detail was overlooked in \cite{K88}.%
} Otherwise we say that $\rank(\mu)>k$. $\rank(\mu)=k$ means that
$\rank(\mu)\le k$ and $\rank(\mu)>k-1$. Finally, $\rank(\mu)=\infty$
if $\rank(\mu)>k$ for any natural $k$. $\rank(\mu)$ is referred
to as the $\emph{rank}$ of $(X,\CB,\mu,T)$. \end{defn}
\noindent
Notice that the definition does not require $\mu$ to be ergodic.
If $\mu$ is not ergodic then, since $(X,\CB,\mu)$ is standard, we
have the following formula (called the \emph{ergodic decomposition}):
\[
\mu=\int\nu_{y}\, d\xi_{\mu}(y),
\]
where $\xi_{\mu}$ is the projection of $\mu$ onto the
$\sigma$-algebra $\Sigma$ of invariant sets, and the measures $\nu_{y}$
are ergodic and supported by the atoms $y$ of $\Sigma$. Distinct
ergodic measures are mutually singular.

\begin{rem}\label{rem:invertibility} It is well known (for ergodic
systems, but easily extends to nonergodic systems as well due to the additivity of rank) that finite
rank implies zero Kolmogorov--Sinai entropy (see proposition 1.8 of \cite{K88}),
which further implies that the transformation $T$ is invertible $\mu$-almost
everywhere. We will use this fact several times. \end{rem}

We now prove the following ``additive rule'', which is a simple generalization of Lemma 1.2 of \cite{K88}:

\begin{thm}\label{thm:additive} The rank function satisfies
\[
\rank(\mu)=\sum_{\nu\in\supp(\xi_{\mu})}\rank(\nu).
\]
\end{thm}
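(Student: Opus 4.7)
The plan is to establish the two inequalities separately. For $\rank(\mu)\le\sum_\nu\rank(\nu)$ (the easy direction), I may assume the right-hand side is finite; since each ergodic rank is at least $1$, this forces $\supp(\xi_\mu)$ to consist of finitely many ergodic measures $\nu_1,\ldots,\nu_m$ sitting on disjoint $T$-invariant sets $X_1,\ldots,X_m$ of weights $\alpha_j=\mu(X_j)>0$. Given a partition $\CQ$ of $X$ and $\eps>0$, for each $j$ I will pick a $\rank(\nu_j)$-tower partition $\CP_j$ of $(X_j,\nu_j)$ that $\eps$-refines $\CQ|_{X_j}$ with $\nu_j$-remainder below $\eps$, and then assemble the various $\CP_j$'s into a single $\bigl(\sum_j\rank(\nu_j)\bigr)$-tower partition $\CP$ of $X$. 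Because the $X_j$'s are $T$-invariant and mutually disjoint, the towers of the various $\CP_j$'s remain disjoint inside $X$; with $Y_\eps:=\bigsqcup_j Y_\eps^{(j)}$ and using $\mu=\sum_j\alpha_j\nu_j$, a direct computation gives $\mu(R)<\eps$ and $\CP\succ_\eps\CQ$.

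For the reverse inequality I will assume $k:=\rank(\mu)<\infty$ and prove simultaneously that $\supp(\xi_\mu)$ has at most $k$ elements and that the ranks of its components sum to at most $k$. I pick any finitely many ergodic measures $\nu_1,\ldots,\nu_N$ in $\supp(\xi_\mu)$ on disjoint invariant sets $X_j$ of positive masses $\alpha_j$. For arbitrary partitions $\CQ_j$ of $X_j$ I assemble a partition $\CQ$ of $X$ refining $\{X_1,\ldots,X_N, X\setminus\bigsqcup_j X_j\}$ and apply the rank-$k$ property to obtain a $k$-tower partition $\CP=\{T^iB_l,R\}$ with $\mu(R)<\eps$ and $\CP\succ_\eps\CQ$ witnessed by a set $Y_\eps$ of $\mu$-measure at least $1-\eps$. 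The refinement condition forces $B_l\cap Y_\eps\subset X_{j(l)}$ for some $j(l)\in\{0,1,\ldots,N\}$; $T$-invariance of the $X_j$'s, together with $\mu$-a.e.\ invertibility of $T$ from Remark~\ref{rem:invertibility}, propagates this consistency from the base to every level of the tower modulo the set $Y_\eps^c$. Grouping towers by the index $j(l)$ yields sizes $k_j:=|L_j|$ with $\sum_{j=0}^N k_j=k$.

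The quantitative heart of the argument is showing that the induced sub-tower partition of $X_j$, with bases $B_l\cap X_j$ for $l\in L_j$ and heights $n_l$, is a valid $k_j$-tower approximation of $(X_j,\nu_j,\CQ_j)$. Its $\mu$-remainder decomposes as $R\cap X_j$ (of measure $<\eps$) plus the ``leakage'' $\sum_{l\notin L_j}n_l\mu(B_l\cap X_j)$. The key estimate is $\sum_{l\in L_j}n_l\mu(B_l)\le\alpha_j+\eps$, which follows from the fact that on $Y_\eps$ the $L_j$-towers are confined to $X_j$; combining with the analogous bounds for $j'\neq j$ and with the total tower mass $\sum_l n_l\mu(B_l)\ge1-\eps$, the leakage is forced to be of order $\eps$ (with a constant depending on $N$). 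Dividing by $\alpha_j$, the $\nu_j$-remainder is $O(\eps/\alpha_j)$, and a similar check shows the induced partition refines $\CQ_j$ up to a $\nu_j$-set of measure $O(\eps/\alpha_j)$; letting $\eps\to 0$ gives $\rank(\nu_j)\le k_j$, and hence $\sum_{j=1}^N\rank(\nu_j)\le k$. Choosing $N=k+1$ would force $k+1\le k$, so $\supp(\xi_\mu)$ must have at most $k$ elements, and the case $N=m$ completes the proof. The main obstacle is this leakage estimate: a naive level-wise Markov bound produces an error of order $n_{\max}\eps$, which is inadequate since tower heights may blow up as $\eps\to 0$; the workaround is to sidestep pointwise arguments and compare total tower masses against the component weights $\alpha_j$, exploiting the $T$-invariance of each $X_j$. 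A minor technicality is the handling of degenerate towers for which $\mu(B_l\cap Y_\eps)<\eps$ and the level indices $j(l,i)$ may disagree across $i$; a short calculation shows such configurations are inconsistent with the mass constraints once $\eps$ is small relative to the $\alpha_j$, reducing the analysis to the generic case.
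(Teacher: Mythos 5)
Your proposal takes a much more ambitious route than the paper: the paper proves Theorem~\ref{thm:additive} in three lines by reducing to Lemma~\ref{lem:convex} (King's Lemma~1.2, which is quoted there without proof) and, for measures with infinite ergodic support, splitting $\mu$ into arbitrarily many mutually singular invariant summands each of rank at least~$1$. You are in effect reproving King's lemma from scratch for $N$ components. The easy direction, the overall architecture of the hard direction (assign each tower of a $k$-tower partition to a component and compare masses), and your $N=k+1$ argument for finiteness of the support (which matches the paper's splitting trick) are all sound in outline.

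The genuine gap is in the leakage estimate. You define $j(l)$ by the containment $B_l\cap Y_\eps\subset X_{j(l)}$ and claim the key bound $\sum_{l\in L_j}n_l\mu(B_l)\le\alpha_j+\eps$ ``follows from the fact that on $Y_\eps$ the $L_j$-towers are confined to $X_j$.'' But confinement of the \emph{base} only propagates to level $i$ modulo the set $T^i(B_l\setminus Y_\eps)$, so the error per tower is $n_l\mu(B_l\setminus Y_\eps)$; summing over $l$ reproduces exactly the $n_{\max}\eps$ bound you yourself declare inadequate, so the claimed estimate is circular as justified. Moreover, your dismissal of the ``degenerate'' towers is incorrect: a tall tower each of whose level sets is almost disjoint from $Y_\eps$ is perfectly consistent with all the mass constraints (the parts outside $Y_\eps$ have total measure $<\eps$, yet the tower can still carry macroscopic mass because $\mu(T^iB_l)$ can vastly exceed $\mu(T^iB_l\cap Y_\eps)$), and for such a tower the base tells you nothing about which $X_j$ it occupies, while the indices $j(l,i)$ may genuinely vary with $i$. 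The repair is to use the refinement at \emph{every} level rather than only at the base, together with the identity $\mu(T^iB_l\cap X_j)=\mu(B_l\cap X_j)$ (valid by invariance of $X_j$, measure preservation and a.e.\ invertibility): assign tower $l$ to the index $j^*(l)$ maximizing $c_{l,j}:=\mu(B_l\cap X_j)$. For each level $i$ there is a single $j(l,i)$ with $T^iB_l\cap Y_\eps\subset X_{j(l,i)}$, whence $c_{l,j}\le\mu(T^iB_l\setminus Y_\eps)$ for every $j\ne j(l,i)$; since for any $j'\ne j^*(l)$ at least one of $j'$ and $j^*(l)$ differs from $j(l,i)$, and $c_{l,j'}\le c_{l,j^*(l)}$, one gets $c_{l,j'}\le\mu(T^iB_l\setminus Y_\eps)$ for \emph{all} $i$. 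Averaging over $i$ yields $n_l\,c_{l,j'}\le\sum_i\mu(T^iB_l\setminus Y_\eps)$, and summing over $l$ and $j'$ bounds the total leakage by $N\eps$ with no dependence on tower heights. With this substitution your argument goes through; as written, the quantitative heart of the proof does not.
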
 In other words, $\rank(\mu)$ can be finite only for measures
which are convex combinations of finitely many ergodic measures, and
then it equals the sum of ranks of the ergodic components (of course
only those with strictly positive coefficients). Other measures have
infinite rank.

\begin{proof} If $\mu$ is a finite combination of ergodic measures,
this follows directly from lemma \ref{lem:convex} below. Every
other measure can be represented as a convex combination of arbitrarily
(still finitely) many mutually singular measures (each of rank at
least 1 -- there is no rank zero), so, by the same theorem its rank
is infinite. Clearly, the sum on the right is, for such measures,
also infinite, so the equality holds. \end{proof}

The following lemma is a reformulation of Lemma 1.2 of \cite{K88}\footnote{In the statement of this lemma it is assumed that the measures forming the convex combination are ergodic, but this property is not actually used in the proof.}:
\begin{lem}\label{lem:convex} Let $(X,\CB,\mu,T)$ be a dynamical
system. Suppose that $\mu=p\mu_{1}+q\mu_{2}$, where $p\in(0,1),q=1-p$,
$\mu_{1}$ and $\mu_{2}$ are mutually singular $T$-invariant measures
on $\CB$. Then:
\[
\rank(\mu)=\rank(\mu_{1})+\rank(\mu_{2}).
\]
\end{lem}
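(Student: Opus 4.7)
The plan is to exploit the Hahn-type decomposition arising from mutual singularity: since both $\mu_1$ and $\mu_2$ are $T$-invariant, there exists a $T$-invariant Borel set $A \subseteq X$ with $\mu_1(A) = 1$ and $\mu_2(A) = 0$, so that $\mu_1$ and $\mu_2$ concentrate on $A$ and $A^c$ respectively. I will prove the two inequalities $\rank(\mu) \leq \rank(\mu_1) + \rank(\mu_2)$ and $\rank(\mu) \geq \rank(\mu_1) + \rank(\mu_2)$ separately. For the forward direction, given $\CQ$ and $\eps$, I would invoke Definition~\ref{def:rank} for each $\mu_i$ with a small parameter $\eps' < \eps$ to produce $k_i$-tower partitions ($k_i = \rank(\mu_i)$); after replacing each $\mu_1$-base by its intersection with $A$ (and each $\mu_2$-base by its intersection with $A^c$), which by the $T$-invariance of $A$ and the a.e.\ invertibility of $T$ from Remark~\ref{rem:invertibility} preserves the tower structure up to $\mu_i$-null sets, I would concatenate the two collections into one $(k_1 + k_2)$-tower $\mu$-partition. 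The remainder and refinement requirements for $\mu$ follow from those for $\mu_1$ and $\mu_2$ via $\mu = p\mu_1 + q\mu_2$ together with the disjointness of $A$ and $A^c$.

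For the reverse inequality, set $k = \rank(\mu)$ (the case $k = \infty$ being trivial). Given partitions $\CQ_1, \CQ_2$ and $\eps > 0$, I would apply $\rank(\mu) \leq k$ to the common refinement $\CQ := \CQ_1 \vee \CQ_2 \vee \{A, A^c\}$ with a small parameter $\eps'$, obtaining a $k$-tower partition $\CP = \{T^i B_l, R\}$ with $\mu(R) < \eps'$ and $\CP \succ_{\eps'} \CQ$ on some $Y$ of $\mu$-measure at least $1 - \eps'$. The key step is to split each base $B_l = (B_l \cap A) \sqcup (B_l \cap A^c)$; using $T$-invariance of $A$ so that $T^i(B_l \cap A) = T^i B_l \cap A$, each tower decomposes into one sub-tower in $A$ and one in $A^c$. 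The heart of the argument is the observation that the $\eps'$-refinement of $\{A, A^c\}$ forces every ``significant'' tower (those with $\mu(B_l) > 2\eps'$) to be essentially pure. Indeed, measure-preservation gives $\mu(T^i B_l \cap A) = \mu(B_l \cap A)$ for every $i$, and each $T^i B_l \cap Y$ lies either in $A$ or in $A^c$; if both types occur within a single tower, then a short computation forces $\mu(B_l) \leq 2\eps'$, contradicting significance. Hence the significant towers classify into $k_A$ of pure type $A$ and $k_{A^c}$ of pure type $A^c$, with $k_A + k_{A^c} \leq k$.

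The restrictions of the type-$A$ significant towers to $A$ would then furnish a $k_A$-tower $\mu_1$-partition refining $\CQ_1$ on most of $A$ with small $\mu_1$-remainder, and symmetrically a $k_{A^c}$-tower $\mu_2$-structure; together this yields $\rank(\mu_1) + \rank(\mu_2) \leq k_A + k_{A^c} \leq k$. The hard part will be the $\eps$-bookkeeping in this conclusion: the $\mu_1$-remainder must absorb $A \cap R$, the (small) $A$-parts of the pure type-$A^c$ significant towers (each contributing at most $n_l \eps'$ in $\mu$-measure), and---most delicately---the $A$-parts of the ``insignificant'' towers ($\mu(B_l) \leq 2\eps'$), which despite their small bases may have heights $n_l$ large enough that their cumulative $\mu$-mass is not automatically negligible. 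The total height over significant towers is automatically controlled via $\sum_l n_l \mu(B_l) \leq 1$ together with $\mu(B_l) > 2\eps'$, yielding $\sum n_l < 1/(2\eps')$; handling the insignificant contribution requires choosing $\eps'$ sufficiently small relative to $\eps, p, q$, an argument carried out carefully in the proof of Lemma~1.2 of \cite{K88}, from which the present lemma follows by reformulation.
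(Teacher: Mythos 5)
The paper itself does not prove this lemma; it simply cites Lemma 1.2 of \cite{K88} (noting in a footnote that King's ergodicity hypothesis is never used), so any self-contained argument is already more than the paper offers. Your easy direction ($\rank(\mu)\le\rank(\mu_1)+\rank(\mu_2)$) and the overall skeleton of the hard direction --- choose a $T$-invariant set $A$ with $\mu_1(A)=1$, $\mu_2(A)=0$, feed $\CQ_1\vee\CQ_2\vee\{A,A^c\}$ into Definition \ref{def:rank} for $\mu$, and exploit near-monochromaticity of the towers --- are sound, and your purity computation for towers with $\mu(B_l)>2\eps'$ is correct.

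The gap is in how you dispose of the ``insignificant'' towers, and your proposed remedy would fail. A tower with base measure $\eps'$ and height about $1/\eps'$ carries total mass bounded away from $0$, and nothing in Definition \ref{def:rank} lets you control the heights of the towers you are handed; so no choice of $\eps'$ ``sufficiently small relative to $\eps,p,q$'' makes the cumulative mass of the sub-threshold towers negligible, and you cannot simply dump them into the remainder. Nor can you keep both their $A$-part and their $A^c$-part, since then such a tower is counted on both sides and you only get $k_A+k_{A^c}\le 2k$. The fix is to abandon the significance threshold on $\mu(B_l)$ altogether and classify \emph{every} tower by the majority of its levels: letting $s_l$ (resp.\ $t_l$) be the number of levels $i$ with $T^iB_l\cap Y\subseteq A$ (resp.\ $\subseteq A^c$), and $a_l=\mu(B_l\cap A)$, $b_l=\mu(B_l\cap A^c)$, the same computation you use for purity gives $\sum_l(s_lb_l+t_la_l)\le\mu(X\setminus Y)<\eps'$; assigning tower $l$ to $A$ when $s_l\ge n_l/2$ and to $A^c$ otherwise then bounds the total discarded cross-mass $\sum_{l\to A}n_lb_l+\sum_{l\to A^c}n_la_l$ by $2\eps'$, every tower lands on exactly one side, and $k_A+k_{A^c}\le k$. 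You also need (and do not mention) a final pigeonhole: the split $(k_A,k_{A^c})$ may depend on $(\CQ_1,\CQ_2,\eps)$, so you must extract one split that recurs along a sequence of refining partitions and vanishing $\eps$ before concluding $\rank(\mu_1)+\rank(\mu_2)\le k_A+k_{A^c}\le k$.
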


\section{Rank in topological systems}

\bigskip{}
 Throughout, by a \emph{topological dynamical system} (t.d.s.) we will
mean a pair $(X,T)$, where $X$ is a metric space (with the metric
denoted by $d$) and $T:X\to X$ is continuous (not necessarily invertible). We will denote by
$\CM_{T}(X)$ the set of all $T$-invariant Borel probability measures
on $X$ and by $\ex\CM_{T}(X)\subset\CM_{T}(X)$ the set of ergodic
measures. It is well known that both sets are nonempty and the former
set equals the simplex whose extreme points constitute the latter
set. For $\mu\in\CM_{T}(X)$ we obtain a measure-theoretic dynamical
system $(X,\CB_{\mu},\mu,T)$, where $\CB_{\mu}$ denotes the $\sigma$-algebra
of Borel sets completed with respect to $\mu$. Note that $(X,\CB_{\mu},\mu)$
is a standard probability space. We are now going to investigate the
\emph{rank function} $\rank:\CM_{T}(X)\to\Nat\cup\{\infty\}$, $\mu\mapsto\rank(\mu)$
(computed in the system $(X,\CB_{\mu},\mu,T)$).
We would like to use the ``additive rule'' of Lemma \ref{lem:convex}
(also in the form of Theorem \ref{thm:additive}) in this context.
Notice however that for two distinct $\mu_{1},\mu_{2}\in\CM_{T}(X)$,
it may happen that $\CB_{\mu_{1}}\neq\CB_{\mu_{2}}$, so technically
the assumption of this theorem may not hold. The following remark
explains why still we can use Lemma \ref{lem:convex}.

\begin{rem} For a t.d.s. rank does not depend on taking the completion.
Indeed if the rank computed for the completed Borel $\sigma$-algebra
is infinite then clearly it is also infinite for the Borel $\sigma$-algebra.
Otherwise the map is invertible mod $\mu$ and then any tower is equal
mod $\mu$ to a tower with Borel level sets. It suffices to replace
the base by its subset (of equal measure) of type $F_{\sigma}$ (a
countable union of closed sets), and note that type $F_{\sigma}$
is preserved by forward images of continuous maps on compact spaces.
By invertibility, the forward images of the discarded part of the
base have measure zero and can be discarded from the level sets. \end{rem}
\noindent
As a consequence of the previous remark we will write $(X,\CB,\mu,T)$ instead of $(X,\CB_{\mu},\mu,T)$ when no confusion arises. The main goal of this section is proving the following theorem:

\begin{thm}\label{thm:main} In any topological dynamical system
$(X,T)$ the rank function $\rank:\CM_{T}(X)\to\Nat\cup\{\infty\}$
is of Young class LU. \end{thm}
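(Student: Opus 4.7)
The plan is to show that for each $k\in\Nat$ the sublevel set $L_{k}:=\{\mu\in\CM_{T}(X):\rank(\mu)\le k\}$ is a $G_{\delta}$ subset in the weak-$\ast$ topology on $\CM_{T}(X)$. Granting this, each superlevel set $\{\rank\ge k\}$ is $F_{\sigma}$; writing $\{\rank\ge k\}=\bigcup_{n}F_{k,n}$ with closed sets $F_{k,n}$ increasing in $n$, the partial sums $g_{n}:=\sum_{k=1}^{n}\mathbf{1}_{F_{k,n}}$ form a non-decreasing sequence of upper semicontinuous functions whose pointwise supremum equals $\rank=\sum_{k\ge 1}\mathbf{1}_{\{\rank\ge k\}}$, establishing the LU property.

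To obtain the $G_{\delta}$ representation, fix a countable Boolean algebra $\CA$ of Borel subsets of $X$ that is $L^{1}(\nu)$-dense in the Borel $\sigma$-algebra for every $\nu\in\CM_{T}(X)$---for instance the algebra generated by a countable basis of the topology, whose density follows from inner/outer regularity. Enumerate the finite partitions of $X$ with atoms in $\CA$ as $\CQ_{1},\CQ_{2},\ldots$. Because the relation $\succ_{\eps}$ depends in a Lipschitz fashion on $L^{1}(\mu)$-perturbations of the partitions involved, one checks that $\rank(\mu)\le k$ iff for every $i,m\in\Nat$ there exists a $k$-tower partition $\CP$ with $\CP\succ_{1/m}\CQ_{i}$ and $\mu(R)<1/m$. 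Denoting this existence set by $C_{i,m,k}$, we have $L_{k}=\bigcap_{i,m}C_{i,m,k}$, so the task reduces to proving that each $C_{i,m,k}$ is open.

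Fix $\mu\in C_{i,m,k}$ with a witness tower (bases $B_{1},\ldots,B_{k}$, heights $n_{1},\ldots,n_{k}$) enjoying some slack $\delta>0$ in both strict inequalities. Since $\rank(\mu)\le k<\infty$, Remark~\ref{rem:invertibility} yields that $T$ is invertible mod $\mu$; I would use this to regularise the witness in three steps: (i) shrink each $B_{l}$ to a compactum $K_{l}\subset B_{l}$ of almost full measure; (ii) thicken $K_{l}$ to the closed set $\tilde B_{l}:=\{x:d(x,K_{l})\le\rho_{l}\}$ with the radius $\rho_{l}$ chosen so that every iterate $T^{j}\tilde B_{l}$, $0\le j<n_{l}$, is $\mu$-Jordan---possible because only countably many fattening radii can yield positive boundary $\mu$-mass; and (iii) using $\mu$-a.e.\ invertibility of $T$, push back to the base and excise there any $\mu$-null collisions between the thickened iterates, restoring \emph{exact} set-theoretic pairwise disjointness at a cost of $O(\delta)$ mass. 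After an analogous $\mu$-Jordan $L^{1}$-perturbation of the atoms of $\CQ_{i}$ (absorbed into the $\delta$-slack), the identity
\[
D(\tilde\CP,\CQ_{i},\nu)\,=\,1-\sum_{P\in\tilde\CP}\max_{Q\in\CQ_{i}}\nu(P\cap Q)
\]
combined with the Portmanteau theorem makes both $\nu\mapsto\nu(\tilde R)$ and $\nu\mapsto D(\tilde\CP,\CQ_{i},\nu)$ continuous at $\mu$. Both strict inequalities therefore persist on an entire weak-$\ast$ neighbourhood of $\mu$, proving $C_{i,m,k}$ open.

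The technical heart of the argument is the simultaneous engineering in (ii)--(iii): forcing every forward iterate of every new base to be $\mu$-Jordan while preserving the \emph{exact} pairwise disjointness demanded by the tower definition (which requires set-theoretic, not merely mod-$\mu$, equality). The $\mu$-a.e.\ invertibility of $T$---available precisely because finite rank forces zero Kolmogorov--Sinai entropy---is what makes this possible: any small collision mass appearing at higher tower levels can be traced back uniquely to a small subset of the base and removed there.
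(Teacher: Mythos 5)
Your global strategy (sublevel sets $G_\delta$, hence $\rank=\lim_n\uparrow\sum_{k=1}^{n}\mathbf{1}_{F_{k,n}}$ with $F_{k,n}$ closed) is a valid reformulation of the LU property for an $\mathbb{N}\cup\{\infty\}$-valued function, and your regularization-plus-Portmanteau idea is in the same spirit as the paper's proof of Theorem~\ref{thm:rank}. But there is a genuine gap at the crucial step, the openness of $C_{i,m,k}$. To conclude that a weak-$\ast$ neighbour $\nu$ of $\mu$ lies in $C_{i,m,k}$, you must produce a tower that $\succ_{1/m}$-refines the \emph{original} partition $\CQ_{i}$ with respect to $\nu$. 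Your continuity argument only applies to the perturbed partition $\tilde\CQ_{i}$ whose atoms are $\mu$-continuity sets; the atoms of $\CQ_{i}$ itself (taken from a fixed countable algebra that must work for \emph{every} measure in $\CM_T(X)$) cannot be assumed to have $\mu$-null boundaries, so $\nu\mapsto\nu(P\cap Q)$ need not be continuous at $\mu$ for $Q\in\CQ_i$. The fallback --- that $\tilde\CQ_{i}$ is $L^{1}(\mu)$-close to $\CQ_{i}$, so the error is ``absorbed into the slack'' --- fails precisely where you need it: $\mu(\CQ_{i}\bigtriangleup\tilde\CQ_{i})$ small does \emph{not} imply $\nu(\CQ_{i}\bigtriangleup\tilde\CQ_{i})$ small for $\nu$ merely weak-$\ast$ close to $\mu$, since symmetric differences of Borel sets are not weak-$\ast$ continuous test objects. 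So $D(\tilde\CP,\tilde\CQ_i,\nu)\le 1/m$ gives no control on $D(\tilde\CP,\CQ_i,\nu)$, and the openness of $C_{i,m,k}$ is not established. (A secondary problem: your step (iii) excises from the closed thickened bases the preimages of positive-measure collisions; the excised base is then no longer closed, which undercuts the Portmanteau step and the $\mu$-Jordan property you arranged in (ii).)

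The paper circumvents exactly this obstruction by changing what is being approximated: instead of asking the tower to refine a fixed partition, Definition~\ref{defn:erank} asks that the tower's elements have small \emph{diameter} on a set $X_\eps$ of large measure. The diameter condition is intrinsic to the sets and independent of the measure, so after arranging closed level sets, a closed remainder and an \emph{open} $X_\eps$ (Theorem~\ref{thm:rank}), the only measure-dependent conditions are $\nu(R)<\eps$ (upper semicontinuous in $\nu$, $R$ closed) and $\nu(X_\eps)>1-\eps$ (lower semicontinuous, $X_\eps$ open), which persist on a weak-$\ast$ neighbourhood with no reference to any fixed $\CQ_i$. The price is Lemma~\ref{lem:ranklim}, a separate (nontrivial, and in the atomic case delicate) argument that the diameter-based $\eps$-ranks increase to the true rank. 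If you want to rescue your scheme, you should replace the refinement condition in $C_{i,m,k}$ by a small-diameter condition of this kind; as written, the proof does not go through.
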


In order to prove the theorem we define an approximate notion of rank
applicable to measure-theoretic dynamical systems $(X,\CB,\mu,T)$
arising from a t.d.s. $(X,T)$ (where $X$ is a metric space).

\begin{defn}\label{defn:erank} Let $(X,T)$ be a t.d.s. and let $k\in\Nat$. One says that $\rank_{\eps}(\mu)\leq k$
if there exist a measurable $k$-tower partition $\CP$ of $(X,\CB,\mu,T)$ whose
remainder satisfies $\mu(R)<\eps$, and a measurable set $X_{\eps}$
such that $\mu(X_{\eps})>1-\eps$ and all elements of $\CP|_{X_{\eps}}$
have diameters smaller than $\eps$. Otherwise we say that $\rank_{\eps}(\mu)>k$.
$\rank_{\eps}(\mu)=k$ if $\rank_{\eps}(\mu)\le k$ and $\rank_{\eps}(\mu)>k-1$.
$\rank_{\eps}(\mu)=\infty$ if $\rank_{\eps}(\mu)>k$ for all natural
numbers $k$. $\rank_{\eps}(\mu)$ is referred to as the \emph{$\eps$-rank}
of $(X,\CB,\mu,T)$. \end{defn}

\begin{rem} The definition does not imply existence of a $k$-tower
partition whose all sets except the remainder (of small measure) have
diameters smaller than $\eps$. To achieve small diameters, we may
need to discard large parts (in relative measure) from some level
sets scattered along the towers. Although the discarded parts have
small measure, this may destroy the tower structure on a set of large
measure. \end{rem}

Observe that if $\eps<\eps'$ then $\rank_{\eps}\ge\rank_{\eps'}$.

\begin{lem}\label{lem:ranklim} Let $(X,T)$ be a topological dynamical
system. Let $\mu\in\CM_{T}(X)$. Then $\rank(\mu)\leq k$ if and only
if $\rank_{\eps}(\mu)\leq k$ for all $\eps>0$. In other words (by
monotonicity)
\[
\rank(\mu)=\lim_{m}\uparrow\rank_{\eps_{m}}(\mu),
\]
whenever $\eps_{m}\searrow0$. \end{lem}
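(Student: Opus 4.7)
The plan is to prove both directions of the ``iff'' separately; the ``in other words'' conclusion then follows at once from the monotonicity $\rank_\eps\ge\rank_{\eps'}$ for $\eps<\eps'$ observed just before the lemma, since the equivalence identifies the nondecreasing limit $\lim_m\rank_{\eps_m}(\mu)$ with the least $k\in\Nat\cup\{\infty\}$ for which $\rank_\eps(\mu)\le k$ holds for every $\eps>0$, which is precisely $\rank(\mu)$.

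For the easy direction, assume $\rank(\mu)\le k$ and fix $\eps>0$. I would first manufacture a finite Borel partition $\CQ$ of $X$ whose cells, apart from one ``garbage'' cell of $\mu$-mass below $\eps/2$, all have diameter below $\eps$. Using regularity of the Borel measure $\mu$, pick a compact $K\subset X$ with $\mu(K)>1-\eps/2$, cover $K$ by finitely many open balls of radius $\eps/3$, disjointify them into Borel cells $D_1,\dots,D_m$, and adjoin $X\setminus K$ as the garbage cell. Applying the rank hypothesis to this $\CQ$ with tolerance $\eps/2$ yields a $k$-tower partition $\CP$ with remainder $R$, $\mu(R)<\eps/2$, refining $\CQ$ on some set $Y$ with $\mu(Y)\ge 1-\eps/2$. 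Then $X_\eps:=Y\cap K$ satisfies $\mu(X_\eps)>1-\eps$ by inclusion--exclusion, and every nonempty element of $\CP|_{X_\eps}$ lies inside some $D_j$ (the garbage cell is disjoint from $K$), hence has diameter below $\eps$. This is exactly what $\rank_\eps(\mu)\le k$ demands.

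For the harder direction, assume $\rank_\eps(\mu)\le k$ for every $\eps>0$; fix a finite partition $\CQ=\{Q_1,\dots,Q_n\}$ of $X$ and $\delta>0$. The main obstacle is that $\eps$-rank provides only a \emph{geometric} small-diameter guarantee, whereas $\rank$ demands a \emph{combinatorial} refinement of a possibly jagged $\CQ$. I would bridge this gap by shrinking each $Q_i$, via regularity of $\mu$, to a compact subset $F_i\subset Q_i$ with $\mu(Q_i\setminus F_i)<\delta/(3n)$. The $F_i$ are then pairwise disjoint compacta, so $\eta:=\tfrac12\min_{i\ne j}d(F_i,F_j)>0$. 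Choosing $\eps<\min(\eta,\delta/3)$ and invoking the hypothesis produces a $k$-tower partition $\CP$ with $\mu(R)<\eps$ and a set $X_\eps$ with $\mu(X_\eps)>1-\eps$ all of whose restricted cells have diameter below $\eps$. Since no set of diameter less than $\eta$ can meet two distinct $F_i$, each non-remainder tile $P\in\CP$ satisfies $P\cap X_\eps\cap F_i\ne\emptyset$ for at most one $i$. Setting $Y:=(X_\eps\cap\bigcup_i F_i)\setminus R$, routine measure bookkeeping yields $\mu(Y)>1-\delta$, and by construction $\CP|_Y\succ\CQ|_Y$, so $\CP\succ_\delta\CQ$ and $\rank(\mu)\le k$.

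The delicate step is the ``only if'' direction, specifically the passage from diameter control to a combinatorial refinement of an \emph{arbitrary} given partition; it is resolved by replacing the cells $Q_i$ with well-separated compact inner approximations $F_i$, which forces any sufficiently small-diameter tile to lie in at most one $Q_i$. Everything else is standard regularity and measure-arithmetic.
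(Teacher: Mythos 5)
Your proof is correct. In the harder direction ($\rank_\eps(\mu)\le k$ for all $\eps>0$ implies $\rank(\mu)\le k$) you follow essentially the paper's route: both arguments replace the cells $Q_i$ by compact inner approximations and exploit the fact that a tile of sufficiently small diameter can meet at most one of these well-separated compacta. The paper organizes the bookkeeping differently --- it forms sets $A_i$ that are unions of whole tiles of $\CP_\eta$ and estimates $\mu(A_i\sd Q_i)$ via $\delta$-neighborhoods $K_i^\delta$ chosen by continuity from above --- whereas you define the good set $Y=(X_\eps\cap\bigcup_iF_i)\setminus R$ directly and check the refinement on it; this is a little more economical and equally valid. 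The genuine divergence is in the other direction: the paper first treats nonatomic $\mu$ (using a partition of $X$ into cells of small measure \emph{and} small diameter) and then handles atomic $\mu$ separately, decomposing it as $p\mu'+\sum q_i\mu_i$ with periodic components and invoking the additive rule of Lemma \ref{lem:convex}, while you avoid the case split entirely by permitting one garbage cell of small measure in the test partition and then intersecting with the compact set $K$ to annihilate that cell's contribution. Since the small-measure condition on the cells of $\CQ$ is never actually used in the paper's nonatomic argument (only the small diameters matter), your uniform treatment covers atomic measures as well and spares the appeal to Lemma \ref{lem:convex} at this point. Two cosmetic remarks: the garbage cell should literally be $X\setminus\bigcup_jD_j$ (a subset of $X\setminus K$) so that $\CQ$ is a genuine partition, and the direction you call ``only if'' in your closing paragraph is, in the lemma's phrasing, the ``if'' direction --- neither affects the mathematics.
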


\begin{proof} Assume that $\rank_{\eps}(\mu)\leq k$ for all $\eps>0$. Fix a partition $\CQ=\set{Q_1,\ldots,Q_n}$ and $\eps>0$. The measure $\mu$ (being a Borel measure on a metric space) is regular, therefore there exist compact sets $K_i\subset Q_i$ such that $\mu(Q_i\setminus K_i)<\frac{\eps}{4n}$, $i=1,\ldots,n$. $\mu$ is also continuous from above, so there exists a $\delta>0$ such that $\mu(K_i^\delta)<\mu(K_i)+\frac{\eps}{4n}$, where $K_i^\delta$ denotes the $\delta$-neighborhood of $K_i$. We can also assume $\delta$ to be so small that the sets $K_i^\delta$ are disjoint.

Let $\eta<\min\set{\delta,\frac{\eps}{4n}}$ and let $\CP_\eta$ and $X_\eta$ be as in the definition of $\rank_\eta(\mu)$. Set
\[C_i=\bigcup\set{P\cap X_\eta:P\in \CP_\eta, (P\cap X_\eta)\cap K_i\neq \emptyset}.\]
Observe that $K_i\cap X_\eta\subset C_i\subset K_i^\delta$. Since $\mu(X_\eta)>1-\eta>1-\frac{\eps}{4n}$, we see that $\mu(C_i\sd K_i)<\frac{2\eps}{4n}$. The triangle inequality for the metric $\mu(\cdot \sd \cdot)$ yields the estimate $\mu(C_i \sd Q_i)<\frac{3\eps}{4n}$. The set
\[A_i=\bigcup\set{P\in \CP_\eta: (P\cap X_\eta)\cap K_i\neq \emptyset}\]
is a sum of elements of $\CP_\eta$, we also know that $A_i\cap X_\eta=C_i$. Therefore $A_i$ differs from $C_i$ (in measure) by at most $\frac{\eps}{4n}$, and thus $\mu(A_i\sd Q_i)<\frac{\eps}{n}$. Furthermore, the sets $A_i$ are disjoint, which implies that $\CP_\eta\succ_\eps\CQ$. The remainder of $\CP_\eta$ has measure less than $\frac{\eps}{4n}<\eps$, so we conclude that $\rank(\mu)\leq k$.

The reversed implication will be first handled for nonatomic measures
$\mu$. Let $\eps>0$. Since $\mu$ is nonatomic, there exists
a partition $\CQ=\{Q_{1},\dots,Q_{l}\}$ of $X$ whose elements have
measures smaller than $\eps$ and, moreover, diameters smaller than
$\eps$. Since $\rank(\mu)\leq k$, we can find a $k$-tower partition $\CP$
such that $\CP \succ_\eps\CQ$ and the remainder of $\CP$ has measure less than $\eps$.
By definition this determines a set $Y_{\eps}$ with $\mu(Y_{\eps})>1-\eps$
and such that $\CP|_{Y_{\eps}}\succ\CQ|_{Y_{\eps}}$. Thus the
elements of $\CP|_{Y_{\eps}}$ have diameters smaller than $\eps$.
According to Definition \ref{defn:erank}, we have shown that $\rank_{\eps}(\mu)\le k$.

If $\mu$ has atoms then $\mu=p\mu'+\sum_{i=1}^{n}q_{i}\mu_{i}$ ($0\le p<1$,
$p+\sum_{i=1}^{n}q_{i}=1$, $n\in\mathbb{N}$), where
$\mu'$ is nonatomic and each $\mu_{i}$ is a periodic measure supported
by an individual periodic orbit. Since the measures $\mu',\mu_{1},\mu_{2},\ldots,\mu_{n}$
are mutually singular and the rank of any measure is at least $1$,
we conclude (using the ``additive rule'' of Lemma \ref{lem:convex})
that $k\ge k'+n$, where $k'=\rank(\mu')$ (this also explains why
the number $n$ of periodic orbits must be finite). Given $\eps>0$,
let $\CP$ be the $(k'\!+\! n)$-tower partition consisting of the
$n$ periodic orbits (each viewed as a tower with singleton level
sets) and a $k'$-tower partition of the rest of the space, satisfying
the conditions of Definition \ref{defn:erank} for $\mu'$ (with some
set $X'_{\eps}$), whose existence is established in the preceding
paragraph. It is clear that $\CP$ fulfills the requirements of Definition
\ref{defn:erank} showing that $\rank_{\eps}(\mu)\le k'+n\le k$;
the set $X_{\eps}$ equals the union of $X'_{\eps}$ and the periodic
orbits (then $\mu(X_{\eps})>1-p\eps>1-\eps$). \end{proof}

The following theorem is the key observation of this work. For easier
proof we assume invertibility of the transformation. Subsequently we will use it to prove Theorem \ref{thm:main}
(which holds in the general case, i.e. also for non-invertible transformations).

\begin{thm}\label{thm:rank} Let $(X,T)$ be an \emph{invertible}
(i.e., in which $T$ is a homeomorphism) topological dynamical system.
For any $\eps>0$ the function $\rank_{\eps}(\mu):\CM_{T}(X)\rightarrow\mathbb{N}\cup\{\infty\}$
is upper semicontinuous. \end{thm}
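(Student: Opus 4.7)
The plan is to prove, for each $k\in\Nat$, that the set $\{\mu:\rank_\eps(\mu)\le k\}$ is weak-$*$ open in $\CM_T(X)$; since $\rank_\eps$ takes values in $\Nat\cup\{\infty\}$, this is equivalent to upper semicontinuity. Fix $\mu_0\in\CM_T(X)$ with $\rank_\eps(\mu_0)\le k$, witnessed by a $k$-tower partition $\CP_0=\{T^iB_l^0,R^0\}_{(l,i)\in I}$ together with a set $X_\eps^0$. Because the defining inequalities in Definition \ref{defn:erank} are strict, I would first extract positive slack: pick $\delta,\eta>0$ with $\mu_0(R^0)<\eps-3\delta$, $\mu_0(X_\eps^0)>1-\eps+3\delta$, and $\diam(P\cap X_\eps^0)<\eps-2\eta$ for every $P\in\CP_0$. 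This slack will absorb all approximation losses that appear later.

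The key idea is to replace $(\CP_0,X_\eps^0)$ by an equivalent witness $(\CP,X_\eps)$ in which each tower base $B_l$ is \emph{open} (so every $T^iB_l$ is open by the homeomorphism property) and $X_\eps$ is \emph{open} (so the new remainder $R=X\setminus\bigcup T^iB_l$ is \emph{closed}). Once this is achieved, the Portmanteau theorem finishes the proof: $\nu\mapsto\nu(U)$ is lower semicontinuous for open $U$ and $\nu\mapsto\nu(C)$ is upper semicontinuous for closed $C$, so in a suitable weak-$*$ neighborhood of $\mu_0$ both $\nu(X_\eps)>1-\eps$ and $\nu(R)<\eps$ persist; the sets and their diameters are unchanged, so $(\CP,X_\eps)$ witnesses $\rank_\eps(\nu)\le k$ for every $\nu$ in that neighborhood.

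To construct the new witness I would proceed as follows. By inner regularity of $\mu_0$, approximate from within by compact sets $K_l\subset B_l^0$, $L\subset X_\eps^0$, and $Z_0\subset R^0\cap X_\eps^0$, losing as little $\mu_0$-measure as desired. The disjoint compacts $\{T^iK_l:(l,i)\in I\}$ have positive minimum pairwise distance $d_0$, and by a further shrinkage we may assume $d(Z_0,T^iK_l)\ge d_0/2$ for every $(l,i)\in I$. Exploiting uniform continuity of the finitely many homeomorphisms $T^{\pm i}$ (here compactness of $X$ and invertibility of $T$ enter essentially), first pick $\rho>0$ small enough that the open thickenings $B_l:=\{x:d(x,K_l)<\rho\}$ have pairwise disjoint images $T^iB_l$, each trapped in a very thin neighborhood of $T^iK_l$; then pick $\rho',\rho''>0$ smaller still so that (a) the $\rho'$-thickening of $Y_{l,i}:=L\cap T^iK_l$ (compact of diameter $<\eps-2\eta$) sits inside $T^iB_l$, and (b) the $\rho''$-thickening of $Z_0$ is disjoint from $\bigcup T^iB_l$. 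Defining
\[
X_\eps:=\bigcup_{(l,i)\in I}\{x:d(x,Y_{l,i})<\rho'\}\,\cup\,\{x:d(x,Z_0)<\rho''\},
\]
every cell of $\CP|_{X_\eps}$ has diameter $<\eps$ by construction, and a direct measure estimate (using the slack $3\delta$) yields $\mu_0(R)<\eps$ and $\mu_0(X_\eps)>1-\eps$.

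The main technical obstacle is this geometric coordination: we must simultaneously (i) make the tower level sets open (to obtain a closed remainder), (ii) preserve disjointness of the $T^iB_l$, (iii) keep diameters of both $T^iB_l\cap X_\eps$ and $R\cap X_\eps$ below $\eps$, and (iv) not lose too much measure at any approximation step. All four constraints are coupled through the scales $\rho,\rho',\rho''$, and the order in which they are chosen -- $\rho$ first, controlled by $d_0$ and the modulus of continuity of $T^{\pm 1}$, then $\rho',\rho''$ much smaller -- is what makes the construction work.
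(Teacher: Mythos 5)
Your proposal is correct and follows essentially the same strategy as the paper's proof: replace the witness by one with open level sets (hence a closed remainder) and an open $X_\eps$, coordinating the thickening scales via disjointness of compacts and uniform continuity of the iterates of the homeomorphism, then conclude from the semicontinuity of $\nu\mapsto\nu(C)$ for closed $C$ and $\nu\mapsto\nu(U)$ for open $U$. The only cosmetic difference is that the paper first passes to closed level sets by inner regularity and then thickens them, whereas you go directly from compact inner approximations to open neighborhoods.
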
 \begin{proof} We need to show
that for each $t\in\mathbb{R}$, $\rank_{\eps}(\mu)<t$ holds on an
open set of invariant measures. Since $\rank_{\eps}$ assumes only
natural values (or $\infty$) this set of measures is nonempty only
for $t>1$ and then the condition $\rank_{\eps}(\mu)<t$ can be equivalently
replaced by $\rank_{\eps}(\mu)\le k$ for some $k\in\mathbb{N}$.

Assume $\rank_{\eps}(\mu)\le k$. This means there exists a measurable
$k$-tower partition $\CP=\{T^{i}B_{l},R\}_{(l,i)\in I}$ 
with $\mu(R)<\eps$ and a set $X_{\eps}$ with $\mu(X_{\eps})>1-\eps$
such that $\CP|_{X_{\eps}}$ consists of sets with diameters smaller
than $\eps$. We will now explain why we can assume that all level
sets of the towers are closed. Choose a positive number $\xi$ such
that $\mu(R)+\xi<\eps$ and $\mu(X_{\eps})-\xi>1-\eps$. By regularity
we can find closed subsets sets $B'_{l}\subset B_{l}$ so that $\mu(B_{l}\setminus B'_{l})<\frac{\xi}{kn_{l}}$.
Then, for all pairs $(l,i)\in I$ the images $T^{i}B'_{l}$ are closed,
contained in $T^{i}B_{l}$ and
\[
\mu(T^{i}B_{l}\setminus T^{i}B'_{l})<\tfrac{\xi}{kn_{l}}
\]
(here we use the assumption that $T$ is invertible). Let $\CP'=\{T^{i}B'_{l},R'\}_{(l,i)\in I}$
be the $k$-tower partition associated with the new (smaller) bases
$B_{l}'$ and a new (larger) remainder set $R'$. The difference $R'\setminus R$
equals the union of the parts discarded from the level sets, so its
measure is smaller than $\xi$. Thus $\mu(R')\le\mu(R)+\xi<\eps$.
Similarly, the set $X'_{\eps}=X_{\eps}\setminus(R'\setminus R)$ has
measure larger than $\mu(X_{\eps})-\xi>1-\eps$. Because $X'_{\eps}$
differs from $X_{\eps}$ only within the new remainder, the partition
$\CP'|_{X'_{\eps}}$ consists of the sets $T^{i}B'_{l}\cap X'_{\eps}=T^{i}B'_{l}\cap X_{\eps}\subset T^{i}B_{l}\cap X_{\eps}$
(which have diameters smaller than $\eps$) and $R'\cap X'_{\eps}=R\cap X_{\eps}$
(also of diameter smaller than $\eps$).

From now on we assume that the original $k$-tower partition $\CP=\{T^{i}B_{l},R\}$
has closed level sets. We are going to modify the tower and the set
$X_{\eps}$ once more, so that $R$ becomes closed and $X_{\eps}$
open. Once again, choose a positive $\xi$ such that $\mu(X_{\eps})-\xi>1-\eps$
(this time the other condition, involving the remainder, will not be needed).
Define \[R^{-\delta}=\set{x\in R:d(x,X\setminus R)\geq \delta}.\]
Find $\delta$ such that $\mu(R\setminus R^{-\delta})<\xi$ and let $\alpha$ denote
a positive number smaller than half of the smallest distance between
two distinct closed sets from the family $\{T^{i}B_{l},R^{-\delta}\}$
(clearly $\alpha\le\frac{\delta}{2}$). Let $\beta>0$ be so small
that $d(x,y)<\beta\implies d(T^{i}x,T^{i}y)<\alpha$ for all $0\le i<\max\{n_{1},\dots, n_{k}\}$
(clearly $\beta\le\alpha$). Define $B'_{l}=B_{l}^{\beta}$ (i.e.,
the open $\beta$-neighborhood around $B_{l}$).
For every pair $(l,i)\in I$ we have $T^{i}B'_{l}\subset(T^{i}B_{l})^{\alpha}$
which implies that the sets $T^{i}B'_{l}$ are pairwise disjoint,
hence form a new $k$-tower partition $\CP'$ with a new smaller and
closed remainder $R'$.

Let $\gamma$ be such that $(T^{i}B_{l})^{\gamma}\subset T^{i}B'_{l}$
for all pairs $(l,i)\in I$ (here we use again that $T$ is a homeomorphism,
so the new level sets $T^{i}B'_{l}$ are all open neighborhoods of
the old closed level sets $T^{i}B{}_{l}$). Clearly, $\gamma\le\beta\le\alpha$.
We can choose $\gamma$ also smaller than half of the difference between
$\eps$ and the largest diameter of an element of $\CP|_{X_{\eps}}$.
We can now define the modified \emph{open} version of $X_{\eps}$:
\[
X'_{\eps}=\bigcup_{(l,i)\in I}(T^{i}B_{l}\cap X_{\eps})^{\gamma}\cup(R^{-\delta}\cap X_{\eps})^{\gamma}.
\]
Notice that $X'_{\eps}$ contains $X_{\eps}$ except its part contained
in $R\setminus R^{-\delta}$ (this is seen even if we disregard the
$\gamma$-neighborhoods). So the measure of $X'_{\eps}$ has dropped
by at most $\xi$ and thus is still larger than $1-\eps$. Since $(T^{i}B_{l}\cap X_{\eps})^{\gamma}\subset(T^{i}B_{l})^{\gamma}\subset T^{i}B'_{l}\subset(T^{i}B_{l})^{\alpha}$
for all pairs $(l,i)$ and $(R^{-\delta}\cap X_{\eps})^{\gamma}\subset(R^{-\delta})^{\alpha}$,
the items of the union defining $X'_{\eps}$ are pairwise disjoint
(as $\{(T^{i}B_{l})^{\alpha},R^{\alpha}\}$ are pairwise disjoint), and
each new level set $T^{i}B'_{l}$ intersects only one of them, namely
$(T^{i}B_{l}\cap X_{\eps})^{\gamma}$. Moreover, as  $T^{i}B'_{l}\subset(T^{i}B_{l})^{\alpha}$ we have
in fact the following equality :
\[
T^{i}B'_{l}\cap X'_{\eps}=(T^{i}B_{l}\cap X_{\eps})^{\gamma}.
\]
This implies that the last item $(R^{-\delta}\cap X_{\eps})^{\gamma}$
equals the intersection of $X'_{\eps}$ with the remainder $R'$ of
the new tower. We have just proven that the items of the union defining
$X'_{\eps}$ correspond to the elements of the partition $\CP'|_{X'_{\eps}}$.
As $\gamma$ is smaller than half of the difference between $\eps$
and the largest diameter of an element of $\CP|_{X_{\eps}}$ (and
since $R^{-\delta}\subset R$), the diameters of all these items are
smaller than $\eps$.

To summarize, we have shown that if $\rank_{\eps}(\mu)\le k$ then
we can arrange the partition $\CP$ with a closed reminder $R$, so
that the conditions in Definition \ref{defn:erank} are fulfilled
with an open set $X_{\eps}$. Because the measure of a closed set
is an upper semicontinuous function of the measure (see Remark \ref{rem: usc}),
we have $\mu'(R)<\eps$ and $\mu'(X\setminus X_{\eps})<\eps$ (i.e.,
$\mu'(X_{\eps})>1-\eps$) on an open set of measures (containing $\mu$).
The same partition $\CP$ and the same set $X_{\eps}$ now give that
$\rank_{\eps}(\mu')\le k$ for all these measures, concluding the
proof. \end{proof}

We can now prove the main result of this paper.

\begin{proof}{[}Proof of Theorem \ref{thm:main}{]} If $T$ is a
homeomorphism, the result is a direct consequence of the preceding
Theorem \ref{thm:rank} and Lemma \ref{lem:ranklim}. For noninvertible
maps we first embed $(X,T)$ in another t.d.s. $(X',T')$ such that
$T'$ is surjective on $X'$ (see the paragraph after Definition 6.8.10
on p. 189 of \cite{D11}). Next we construct an extension $(X'',T'')\rightarrow(X',T')$
so that $T''$ is a homeomorphism and every $T'$-invariant measure $\mu'\in M_{T'}(X')$ lifts to a unique $T''$-invariant measure $\mu''\in M_{T''}(X'')$ such that the measure-theoretic system $(X'',\CB_{X''},\mu'',T'')$
is isomorphic to the measure-theoretic natural extension of $(X',\CB_{X'},\mu',T')$.
Moreover, the correspondence $\mu'\mapsto\mu''$ is a homeomorphism
between $\CM_{T'}(X')$ and $\CM_{T''}(X'')$ (the details of this
construction, the so called \textit{topological natural extension},
can be found in \cite{D11} pages 189-190 and pages 111-112). We will
argue that $\rank(\mu')=\rank(\mu'')$. If $\mu'$ has entropy zero
then $T'$ is invertible modulo $\mu'$, and then the system $(X',\CB_{X'},\mu',T')$
is isomorphic to its own natural extension, and thus to $(X'',\CB_{X''},\mu'',T'')$,
which obviously implies the desired equality of the ranks. Otherwise
both $\mu'$ and $\mu''$ have nonzero entropy hence infinite rank.
Since we already know that the rank function is of class LU on $\CM_{T''}(X'')$,
it follows that it is of the same class on $\CM_{T'}(X')$ and hence,
by restriction, on $\CM_{T}(X)$. \end{proof}

\section{Open Questions}

We have obtained that for any topological dynamical system, the rank
function defined on $\CM_{T}(X)$ is of Young class LU and obeys the
``additive rule'' of Theorem \ref{thm:additive}. In particular,
this function is completely determined by its restriction to $\ex\CM_{T}(X)$,
which obviously is also of class LU. Two natural question arise:
\begin{enumerate}
\item Given a metrizable Choquet simplex $K$ and an LU function on $\ex K$,
is its extension to all of $K$ by the ``additive rule'' automatically
of class LU?
\item Are these the only ``rank obstructions''? I.e., given an LU function
on a metrizable Choquet simplex $r:K\to\mathbb{N}\cup\{\infty\}$
which obeys the ``additive rule'', does there exist a topological
dynamical system (perhaps minimal) realizing $r$ as the rank function
on the simplex of invariant measures?
\end{enumerate}
\bibliographystyle{alpha}
\bibliography{universal_bib}

\def\cprime{$'$}
\begin{thebibliography}{ORW82}

\bibitem[Bou98]{B98}
Nicolas Bourbaki.
\newblock {\em General topology. {C}hapters 1--4}.
\newblock Elements of Mathematics (Berlin). Springer-Verlag, Berlin, 1998.
\newblock Translated from the French, Reprint of the 1989 English translation.

\bibitem[Dow08]{D8}
Tomasz Downarowicz.
\newblock Faces of simplexes of invariant measures.
\newblock {\em Israel J. Math.}, 165:189--210, 2008.

\bibitem[Dow11]{D11}
Tomasz Downarowicz.
\newblock {\em Entropy in dynamical systems}, volume~18 of {\em New
  Mathematical Monographs}.
\newblock Cambridge University Press, Cambridge, 2011.

\bibitem[DS03]{DS03}
Tomasz Downarowicz and Jacek Serafin.
\newblock Possible entropy functions.
\newblock {\em Israel J. Math.}, 135:221--250, 2003.

\bibitem[Fer97]{F97}
S{\'e}bastien Ferenczi.
\newblock Systems of finite rank.
\newblock {\em Colloq. Math.}, 73(1):35--65, 1997.

\bibitem[Kin88]{K88}
Jonathan~L. King.
\newblock Joining-rank and the structure of finite rank mixing transformations.
\newblock {\em J. Analyse Math.}, 51:182--227, 1988.

\bibitem[KO06]{KO06}
Isaac Kornfeld and Nicholas Ormes.
\newblock Topological realizations of families of ergodic automorphisms,
  multitowers and orbit equivalence.
\newblock {\em Israel J. Math.}, 155:335--357, 2006.

\bibitem[ORW82]{ORW82}
Donald~S. Ornstein, Daniel~J. Rudolph, and Benjamin Weiss.
\newblock Equivalence of measure preserving transformations.
\newblock {\em Mem. Amer. Math. Soc.}, 37(262):xii+116, 1982.

\bibitem[Roy88]{R88}
H.~L. Royden.
\newblock {\em Real analysis}.
\newblock Macmillan Publishing Company, New York, third edition, 1988.

\bibitem[You11]{Y1911}
W.~H. Young.
\newblock On a new method in the theory of integration.
\newblock {\em Proceedings of the London Mathematical Society}, s2-9(1):15--50,
  1911.

\end{thebibliography}

\bigskip\noindent
\textsc{Tomasz Downarowicz, Institute of Mathematics and Computer Science, Wroclaw University of Technology, Wybrze\.ze Wyspia\'nskiego 27, 50-370 Wroc\l aw, Poland.}

\noindent
\emph{E-mail address:} \texttt{downar@pwr.wroc.pl}
\vskip 0.5 cm
\noindent
\textsc{Yonatan Gutman, Institute of Mathematics, Polish Academy of Sciences, ul. \'{S}niadeckich 8,00-956 Warszawa, Poland.}

\noindent
\emph{E-mail address:} \texttt{y.gutman@impan.pl}
\vskip 0.5 cm
\noindent
\textsc{Dawid Huczek, Institute of Mathematics and Computer Science, Wroclaw University of Technology, Wybrze\.ze Wyspia\'nskiego 27, 50-370 Wroc\l aw, Poland.}

\noindent
\emph{E-mail address:} \texttt{dawid.huczek@pwr.wroc.pl}

\end{document}